\documentclass[a4paper,12pt]{amsart}

\usepackage[
hmarginratio={1:1},     % equal left and right margins
vmarginratio={1:1},     % equal top and bottom margins
textwidth=15cm,        % new text width
textheight=21cm,			% new text height
heightrounded,          % always useful
]{geometry}

\usepackage[utf8]{inputenc}
\usepackage[UKenglish]{babel}
\usepackage{csquotes}
\usepackage{graphicx}
\usepackage[pagebackref=false]{hyperref}
\usepackage{amsfonts, amstext, amsmath, amsthm, amscd, amssymb}
\usepackage[sc]{mathpazo}
\usepackage{mathtools}
\usepackage[dvipsnames]{xcolor}
\usepackage{tikz}
\usetikzlibrary{positioning}
\usepackage{float}
\usepackage{svg}
\usepackage[
    backend=biber,
    style=alphabetic,
    sorting=nyt,
	maxnames=10,
    url=false,
    isbn=false,
	doi=false,
]{biblatex}

\hypersetup{
  colorlinks=true,
  linkcolor=black,
  citecolor=black,
  urlcolor=blue,
}

\definecolor{dark-red}{rgb}{0.4,0.15,0.15}
\definecolor{dark-blue}{rgb}{0.15,0.15,0.4}
\definecolor{dark-green}{rgb}{0.15,0.4,0.15}

\numberwithin{equation}{section}

\theoremstyle{plain}
\newtheorem{theorem}{Theorem}[section]

\newtheorem{lemma}[theorem]{Lemma}
\newtheorem{corollary}[theorem]{Corollary}

\theoremstyle{definition}
\newtheorem{definition}[theorem]{Definition}
\newtheorem{example}[theorem]{Example}

\newcommand{\variable}{}
\newtheorem*{thm_again*}{Theorem~\ref{\variable}}
\newenvironment{thm_again}[2]
 {
  \renewcommand{\variable}{#1}
  \begin{thm_again*}[#2]
  }
 {
  \end{thm_again*}
  }

\providecommand{\keywords}[1]{\textbf{\textit{Key words and phrases: }} #1}
\providecommand{\subjclass}[1]{\textbf{\textit{2020 Mathematics Subject Classification: }} #1}

\newcommand{\s}{\mathbb{S}}
\newcommand{\ZZ}{\mathbb{Z}}
\newcommand{\RR}{\mathbb{R}}

\newcommand{\tr}{\triangleright}
\newcommand{\tl}{\triangleleft}
\renewcommand{\epsilon}{\varepsilon}

\DeclareMathOperator{\rk}{rk}
\newcommand{\retract}{\mathbin{\rotatebox[origin=c]{-45}{$\rightsquigarrow$}}}
\title{The fundamental quandle of ribbon concordances}
\author{Eva Horvat, Luka Marčič}
\address{University of Ljubljana, Faculty of Education, Kardeljeva plo\v s\v cad 16, 1000 Ljubljana, Slovenia \newline
	University of Ljubljana, Faculty of Mathematics and Physics, Jadranska ulica 19, 1000 Ljubljana, Slovenia}
\email{\href{mailto:eva.horvat@pef.uni-lj.si}{eva.horvat@pef.uni-lj.si}\\
	\href{mailto:luka.marcic@fmf.uni-lj.si}{luka.marcic@fmf.uni-lj.si}}
\subjclass[2020]{57K10, 57K12, 57K45}
\keywords{quandle, fundamental quandle, surface-link, ribbon concordance}
\date{\today}

\begin{document}

\begin{abstract}
	We describe the fundamental quandle of a properly embedded surface $F$ (possibly with boundary) in $\RR ^{3}\times I$, 
	and derive its presentation in terms of a motion picture diagram or a CH-diagram of $F$. 
	Our study is based on the topological definition of the fundamental quandle. 
	We prove that a ribbon concordance $C$ from a classical knot $K_1$ to $K_0$ gives rise to an injective quandle homomorphism 
	$Q(K_0)\to Q(C)$ and a surjective quandle homomorphism $Q(K_1)\to Q(C)$.
\end{abstract}

\maketitle

\section{Introduction}
Quandles are algebraic structures whose axioms are designed to mimic the Reidemeister moves between classical link diagrams. 
Introduced by David Joyce in 1982~\cite{J}, the fundamental quandle provides an almost complete invariant of classical knots. 
From the turn of the century onwards, our knowledge about the algebra of quandles and their numerous applications in knot theory has progressed rapidly,
resulting in a myriad of powerful computable invariants.

The fundamental quandle of a properly embedded codimension 2 submanifold of a connected manifold was topologically defined in~\cite{J} and~\cite{FR}. 
Quandle and biquandle cocycle invariants of closed embedded surfaces in the 4-space have been studied by several authors, see for example~\cite{CKS1}. 
These methods are usually based on some diagram-based algebra without a clear connection with the topological definition. 
In this paper, we try to fill this gap of reasoning by explaining the relationship between the fundamental quandle of an embedded surface $F$ with boundary and the fundamental quandles of its boundary components. 
We construct a presentation of $Q(F)$ from a motion picture diagram of $F$. 
Our procedure also gives a presentation of $Q(F)$ from a CH-diagram or similar (e.g. banded link) diagram. 
Using our construction for a ribbon concordance between two classical knots, we prove a quandle analogue of Gordon's lemma about the knot groups of two ribbon concordant knots:  
\begin{lemma}[\cite{G}]
	\label{lemma:gordon}
	If $C$ is a ribbon concordance from $K_1$ to $K_0$, then $\pi _{1}(K_1)\to \pi _{1}(C)$ is surjective and $\pi _{1}(K_0)\to \pi _{1}(C)$ is injective. 
\end{lemma}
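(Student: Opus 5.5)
We prove Gordon's lemma by the usual handle-decomposition argument, working with groups of complements. Here $\pi_1(K)$ means $\pi_1$ of the complement of $K$ in $\RR^3$ (or $S^3$), and $\pi_1(C)$ means $\pi_1$ of the complement $X_C$ of $C$ in $\RR^3\times I$. After an isotopy, the ribbon concordance $C$ has only index $0$ and index $1$ critical points with respect to the height function $t$. Read from the $K_0$ end, $C$ is the product $K_0\times I$ together with some local minima (births of trivial unknotted components) and then some saddles (bands). Each birth is followed by a band joining the new component to the rest, so the end result is the connected knot $K_1$.

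\emph{Surjectivity.} We build $X_C$ starting from the $K_1$ end, i.e.\ turn the picture upside down. Then $C$ has only index $1$ and $2$ critical points, corresponding to the bands and the deaths. Going up from a level, passing a saddle of $C$ adds a $1$-handle to the surface. In the complement this is dual to attaching a $2$-handle, plus possibly $3$-handles, to the exterior. Passing a local maximum of $C$ (a cap) corresponds to attaching a $3$-handle to the exterior. Thus $X_C$ is obtained from $X_{K_1}\times I$ by attaching only $2$- and $3$-handles. Attaching handles of index $\ge 2$ does not create new generators, so $\pi_1(X_{K_1})\to\pi_1(X_C)$ is surjective. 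Concretely, the induced presentation of $\pi_1(C)$ is that of $\pi_1(K_1)$ with extra relations, one per band, identifying two meridians.

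\emph{Injectivity.} This is the main obstacle. From the $K_0$ end, $X_C$ is built from $X_{K_0}\times I$ using $1$- and $2$-handles, with equally many of each: there are $n$ births and $n$ bands, since $\chi(C)=0$. I would follow Gordon's argument.

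First, by the surjectivity step applied to the reversed cobordism, $\pi_1(K_1)\to\pi_1(C)$ is onto. Second, we can double $C$ along $K_1$: form $D=C\cup_{K_1}\bar C$, a concordance from $K_0$ to $K_0$. It is known, and can be shown by handle cancellation (each band of $\bar C$ cancels against the corresponding birth), that the complement of $D$ is diffeomorphic to $X_{K_0}\times I$. The composite
\[
\pi_1(K_0)\to\pi_1(C)\to\pi_1(D)\cong\pi_1(K_0)
\]
is then the identity, so the first map is injective.

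The delicate point is this diffeomorphism $X_D\cong X_{K_0}\times I$. If it proves elusive, I would fall back on Gordon's original route. Knot groups are residually finite (this uses geometrization), and one has the maps $\pi_1(K_0)\to\pi_1(C)\twoheadleftarrow\pi_1(K_1)$. A rank and Hopfian counting argument over finite quotients, comparing $|\mathrm{Hom}(\cdot,G)|$ for finite groups $G$, then shows that the kernel of $\pi_1(K_0)\to\pi_1(C)$ lies in every finite-index normal subgroup. Hence the kernel is trivial, which gives injectivity.
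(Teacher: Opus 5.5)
The paper does not prove this lemma; it quotes it from Gordon~\cite{G}, so your proposal has to stand on its own. Your surjectivity half is correct, and it is Gordon's argument. Read from the $K_1$ end, each saddle contributes exactly one $2$-handle to the exterior and each maximum one $3$-handle, so $\pi_1(K_1)\to\pi_1(C)$ is onto.

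The injectivity half rests on a false claim: the exterior of $D=C\cup_{K_1}\bar C$ is not $X_{K_0}\times I$ in general. By van Kampen, $\pi_1(X_D)=\pi_1(X_C)*_{\pi_1(X_{K_1})}\pi_1(X_C)$, and both maps out of $\pi_1(X_{K_1})$ are the same surjection from your first step. Hence $\pi_1(X_C)\to\pi_1(X_D)$ is an isomorphism, with the fold map as inverse. Your composite being the identity would then force $\pi_1(K_0)\to\pi_1(C)$ to be an isomorphism for every ribbon concordance.

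That already fails for the standard ribbon concordance (one birth, one band) from the square knot $T\#\bar T$ to the unknot. There $\pi_1(C)$ is the trefoil group, while $\pi_1(K_0)\cong\ZZ$. Geometrically, $D$ is $K_0\times I$ tubed to unknotted $2$-spheres along tubes that may wind nontrivially; in this example $D$ is the product annulus summed with the spun trefoil. The proposed cancellation fails too. In the exterior the births are $1$-handles, and the bands of both $C$ and $\bar C$ are $2$-handles. A $2$-handle cancels a $1$-handle only when its attaching circle runs geometrically once over it, which does not hold in general.

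Your fallback is not yet an argument. The diagram $\pi_1(K_0)\to\pi_1(C)\twoheadleftarrow\pi_1(K_1)$ alone cannot force injectivity, since the trivial group fits into such a diagram. Counting $|\mathrm{Hom}(\cdot,G)|$ also does not show that homomorphisms out of $\pi_1(K_0)$ extend over $\pi_1(C)$, and that extension property is what residual finiteness needs.

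The missing ingredient is the Gerstenhaber--Rothaus theorem, applied to the handle structure you describe but never use. From the $K_0$ end, $X_C$ is $X_{K_0}\times I$ with $n$ $1$-handles and $n$ $2$-handles, so
\[
\pi_1(C)\cong\bigl(\pi_1(K_0)*F(x_1,\dots,x_n)\bigr)/\langle\langle r_1,\dots,r_n\rangle\rangle .
\]
Since $H_*(X_C,X_{K_0})=0$, the exponent-sum matrix of the $r_j$ in the $x_i$ is unimodular. Gerstenhaber--Rothaus, a degree argument for the word map $U(m)^n\to U(m)^n$, then shows that every homomorphism $\phi\colon\pi_1(K_0)\to G\subset U(m)$ with $G$ finite extends to $\pi_1(C)\to U(m)$. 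Given $g\neq 1$, residual finiteness of knot groups provides such a $\phi$ with $\phi(g)\neq1$, so $g$ survives in $\pi_1(C)$. This is the route Gordon takes.
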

This gives us the main result of this paper.

\begin{theorem}
	\label{thm:ribbon_conc_quandle}
	Let $C\subset \s ^{3}\times I$ be a ribbon concordance from a knot $K_1$ to a knot $K_0$, where $K_{i}\in \s ^{3}\times \{i\}$. 
	Then the induced quandle homomorphism $Q(K_{0})\to Q(C)$ is injective, and $Q(K_{1})\to Q(C)$ is surjective.
\end{theorem}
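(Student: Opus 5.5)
The plan is to build a presentation of $Q(C)$ from a motion picture of the ribbon concordance. The presentation will come from the general construction described in the introduction, which gives $Q(F)$ from a motion picture diagram of a properly embedded surface.

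The motion picture is arranged as follows. Since $C$ is ribbon, the projection to $I$ restricted to $C$ has only index $0$ and index $1$ critical points. Read from the $K_0$ end, $C$ consists of $K_0$ together with a split unlink of $n$ components, born at $n$ minima. These are then joined by $n$ bands (saddles), producing $K_1$.

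\textbf{Surjectivity.} Starting from the $K_1$ end, each critical point contributes relations or generators as follows.
\begin{enumerate}
\item Passing a saddle backwards (splitting a band) adds no new generators. It only identifies arcs, so it imposes relations.
\item Isotopies contribute Reidemeister-move relations, which are tautological modulo the quandle axioms.
\item A minimum (death of a trivial component, read backwards) introduces a generator. That generator is already represented by an arc of the band attached to it, so no genuinely new generator appears.
\end{enumerate}
Hence $Q(C)$ is a quotient of $Q(K_1)$ by relations coming from the bands. This shows that the natural map $Q(K_1)\to Q(C)$ is surjective. Topologically the same statement reads: every noose in the complement of $C$ can be pushed into $\s^3\times\{1\}$, because the complement of $C$ is obtained from that of $K_1$ by adding only handles of index $\geq 2$ relative to that end. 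I will make this precise using the topological definition of the fundamental quandle, namely homotopy classes of paths ending on the boundary of a tubular neighbourhood.

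\textbf{Injectivity.} This is the main obstacle. I would reduce it to Gordon's Lemma~\ref{lemma:gordon} via the standard relation between a quandle and its group. The approach rests on Joyce's description of $Q(K)$ as a set of cosets: $Q(K)\cong \{(\text{meridian}, g)\}$ modulo the peripheral subgroup $P_K$, with operation $[g]\tr[h]=[g h^{-1} m h]$ or similar. The same description works for $Q(C)$, with $P_C$ the image of the peripheral torus/annulus subgroup of $C$; for a concordance this is generated by the meridian and the image of the longitude. The map $Q(K_0)\to Q(C)$ sends $P_{K_0}g\mapsto P_C\,\iota(g)$. Injectivity therefore amounts to two facts:
\begin{enumerate}
\item $\iota\colon\pi_1(K_0)\to\pi_1(C)$ is injective, which is Gordon's Lemma.
\item $\iota^{-1}(P_C)=P_{K_0}$.
\end{enumerate}

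For the second fact I would argue as follows. The boundary of a neighbourhood of the annulus $C$ is $S^1\times S^1\times I$, so $P_C$ is the image of $P_{K_0}$ under $\iota$. Now suppose $\iota(g)\in P_C$. Then $\iota(g)=\iota(p)$ for some $p\in P_{K_0}$, and injectivity of $\iota$ gives $g=p$.

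One subtlety must be checked: the coset description requires the quandle to be taken in a connected complement and the map on quandles to be compatible with the coset maps. I expect verifying this compatibility, directly from the topological definition, to be the main technical step.
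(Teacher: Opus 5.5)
Your proposal is correct. For surjectivity you follow the paper: read the movie from $K_1$, where saddles only add relations and the trivial components then die without changing the presentation. Your line that a minimum read backwards ``introduces a generator'' is slightly off, since from the $K_1$ end it is a maximum, i.e.\ a $3$-handle of the complement. But your observation that $E_C$ is built from $E_{K_1}$ by handles of index $2$ and $3$ is the right rigorous justification.

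For injectivity you take a genuinely different route. The paper stays with paths. It proves injectivity for one $0$-/$1$-handle pair at a time (Lemma~\ref{lemma:ribon_conc_quandle}):
it pushes the track of the initial point of a homotopy from $\partial N_C$ into $\partial N_{C_0}$ via a deformation retraction of $N_C$, extends with the homotopy extension property, and finally projects the homotopy from $E_C$ into $E_{C_0}$. It then composes these injections.

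You instead use the coset model $Q(K)\cong P_K\backslash\pi_1(E_K)$, $P_Kg\tr P_Kh=P_Kgh^{-1}mh$. This is valid for $K_0$ and for $C$, since $\partial N_{K_0}$ and $\partial N_C\cong S^1\times S^1\times I$ are connected. You note $\iota_*(P_{K_0})=P_C$ because $\partial N_{K_0}\hookrightarrow\partial N_C$ is a homotopy equivalence. Gordon's Lemma~\ref{lemma:gordon} then gives $\iota_*^{-1}(P_C)=P_{K_0}$, which is exactly injectivity of the coset map. The compatibility you flag is routine: choose the basepoint $z$ and a single reference path from $\partial N_{K_0}$ to $z$ inside $E_{K_0}$, and use them for both quandles.

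What your route buys is that the whole theorem becomes a formal corollary of Gordon's lemma. Surjectivity follows too, since $\pi_1(E_{K_1})\to\pi_1(E_C)$ being onto makes the coset map onto. The cost is importing Gordon's proof, which uses residual finiteness of knot groups, whereas the paper's route aims to be self-contained.

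That import is essentially forced. The map $Q(K_0)\to Q(C)$ is injective iff $\ker\iota_*\subseteq P_{K_0}$. Since $\ker\iota_*$ is normal and the peripheral subgroup of a hyperbolic knot is malnormal, for hyperbolic $K_0$ this is equivalent to Gordon's injectivity. So the paper's last step, moving a homotopy from $E_C$ back into $E_{C_0}$, carries exactly that content. It cannot be done by a levelwise projection, which does not map $E_C$ into $E_{C_0}$ once the saddle is an interior critical point: points just above the saddle level, over the middle of the band's attaching arc, lie in $E_C$ but project onto $K_0\times\{0\}$.
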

We conclude the paper with a brief discussion on possible applications and future research directions.

The paper is organized as follows. 
In Section~\ref{sec1}, we review the basics on quandles and their presentations, recall the topological definition of the fundamental quandle of a codimension 2 submanifold and the definition of a ribbon concordance. 
In Section~\ref{sec2}, we discuss fundamental quandles of embedded surfaces. 
Subsection~\ref{subsec2-1}, gives a procedure to obtain the presentation of the fundamental quandle of a properly embedded surface in $\RR ^{4}$ from its motion picture diagram or a CH-diagram.
Subsection~\ref{subsec2-2} then deals with the special case of ribbon concordances, proving theorem~\ref{thm:ribbon_conc_quandle} and ending in a discussion of the results.

\section{Preliminaries}
\label{sec1}

\subsection{Quandles}
\begin{definition}
	A \textit{quandle} is a set $Q$ with a binary operation $\tr \colon Q\times Q\to Q$ that satisfies the following axioms: 
	\begin{enumerate}
		\item $x\tr x=x$,
		\item the map $R_{y}\colon Q\to Q$, given by $R_{y}(x)=x\tr y$, is a bijection, and
		\item $(x\tr y)\tr z=(x\tr z)\tr (y\tr z)$
	\end{enumerate} for every $x,y,z\in Q$. 
\end{definition}
The second axiom is equivalent to an existence of a second operation defined by $x\tl y=R_{y}^{-1}(x)$, 
such that $(x\tr y)\tl y=x=(x\tl y)\tr y$ for every $x,y\in Q$. 
A \textit{quandle homomorphism} is a function between two quandles $f\colon (Q_{1},\tr _{1})\to (Q_{2},\tr _{2})$ that satisfies the equalities 
$f(x\tr _{1}y)=f(x)\tr _{2}f(y)$ and $f(x \tl_1 y) = f(x) \tl_2 f(x)$ for every $x,y\in Q_1$.

\begin{example}[Conjugation quandle]
	In any group $G$, the operation of conjugation $a\tr b:=b^{-1}ab$ defines a quandle, denoted by $conj(G)$, with
	$a \tl b = b a b^{-1}$.
\end{example}

\begin{definition} 
	Let $Q$ be a set with a binary operation $\tr \colon Q\times Q\to Q$. 
	An equivalence relation $\sim $ on $Q$ is called a $\tr $-\textit{congruence} if $a\sim b\land c\sim d$ implies $a\tr c\sim b\tr d$ for every $a,b,c,d\in Q$. 
\end{definition}
Observe that the induced operation $\tr $ on the quotient $Q/_{\sim }$, given by $[a]\tr [b]=[a\tr b]$, is well-defined iff $\sim $ is a $\tr $-congruence. 
If $(Q,\tr )$ is a quandle and $\sim $ is a $\tr $-congruence and a $\tl $-congruence, then $(Q/_{\sim },\tr )$ is also a quandle.
Note that both congruence conditions are necessary for a rigorous definition~\cite{BT}.

Let $S$ be a set and denote by $F(S)$ the free group over $S$. 
Define operations $\tr$ and $\tl$ on $S \times F(S)$ by $(a,w) \tr (b,z) = (a,w\overline{z}bz)$
and $(a,w) \tl (b,z) = (a,w\overline{z}\overline{b}z)$.
Let an equivalence relation $\sim _{Q}$ on the product $S\times F(S)$ be given by 
$$(a,w_1)\sim _{Q}(b,w_2) \Leftrightarrow a=b\land w_{2}=a^{k}w_1\textrm{ for some }k\in \ZZ \;.$$  
We will denote the quotient set $(S\times F(S))/_{\sim _{Q}}$ by $Q_S$. 
Define an inclusion $\iota \colon S\to Q_{S}$ by $\iota (a)=[a,1]$.

\begin{lemma} 
	The above relation $\sim _{Q}$ is the smallest $\tr $-congruence and $\tl $-congruence on $S\times F(S)$ for which the quotient set $(S\times F(S))/_{\sim _{Q}}$ with induced operation is a quandle.
	For any group $G$ and any map $f\colon S\to G$, there exists a unique quandle homomorphism $\overline{f}\colon Q_{S}\to conj(G)$ with $\overline{f}\circ \iota =f$.  
\end{lemma}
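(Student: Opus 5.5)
We split the proof into two parts: first that $\sim_Q$ is a congruence with quandle quotient and is minimal, then the universal property.

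\emph{Congruence.} Suppose $(a,w_1)\sim_Q(a,a^kw_1)$ and $(b,z)\sim_Q(b,b^mz)$. We must compare $(a,w_1\overline{z}bz)$ with $(a,a^kw_1\overline{z}\overline{b}^{m}bb^mz)$. Since $\overline{b}^{m}bb^m=b$, the second word is $a^k w_1\overline{z}bz$. So the two pairs are related by left multiplication by $a^k$, which is exactly $\sim_Q$. The same computation works for $\tl$, with $b$ replaced by $\overline{b}$. The relation is clearly an equivalence relation, because the exponents $k$ add and negate.

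\emph{Quandle axioms on the quotient.}
\begin{itemize}
\item Idempotency: $(a,w)\tr(a,w)=(a,w\overline{w}aw)=(a,aw)\sim_Q(a,w)$.
\item Invertibility: $((a,w)\tr(b,z))\tl(b,z)=(a,w\overline{z}bz\overline{z}\overline{b}z)=(a,w)$, and symmetrically in the other order.
\item Right self-distributivity: the left side is $(a,w\overline{z}bz\overline{u}cu)$. On the right side, the second factor $(b,z)\tr(c,u)$ is $(b,z\overline{u}cu)$, so we act by $\overline{u}\overline{c}u\,\overline{z}bz\,\overline{u}cu$ on $(a,w\overline{u}cu)$. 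This gives $(a,w\overline{z}bz\overline{u}cu)$, so the two sides agree on the nose.
\end{itemize}
These identities hold already in $S\times F(S)$, apart from idempotency.

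\emph{Minimality.} Let $\approx$ be any $\tr$- and $\tl$-congruence whose quotient is a quandle. Idempotency in the quotient forces $(a,w)\approx(a,w)\tr(a,w)=(a,aw)$, and hence, using $\tl$, also $(a,w)\approx(a,\overline{a}w)$. By transitivity, $(a,w)\approx(a,a^kw)$ for all $k$, so $\sim_Q\subseteq\approx$.

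\emph{Universal property.} Given $f\colon S\to G$, extend it to a group homomorphism $\hat f\colon F(S)\to G$, and set
\[
\overline f[a,w]=\hat f(w)^{-1}f(a)\hat f(w).
\]
This is well defined, since replacing $w$ by $a^kw$ conjugates $f(a)$ by a power of itself, which changes nothing. It is a homomorphism: $\overline f\big((a,w)\tr(b,z)\big)$ is the conjugate of $f(a)$ by $\hat f(w)\hat f(z)^{-1}f(b)\hat f(z)$, and this equals $\overline f[b,z]^{-1}\,\overline f[a,w]\,\overline f[b,z]$. The case of $\tl$ is the same. Clearly $\overline f\circ\iota=f$.

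For uniqueness, note that $[a,w]$ is obtained from $\iota(a)$ by repeatedly applying $\tr\iota(s)$ or $\tl\iota(s)$, one step for each letter $s^{\pm1}$ of $w$. Indeed, $[a,w]\tr[s,1]=[a,ws]$ and $[a,w]\tl[s,1]=[a,w\overline{s}]$. So $Q_S$ is generated by $\iota(S)$, and any homomorphism is determined by its values there.

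The main obstacle is only bookkeeping: checking that the $\tl$-congruence step and the self-distributivity step come out with the words in the correct order. Every other step is immediate.
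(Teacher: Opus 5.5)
Your proof is correct and follows the paper's route. You use the same direct check that $\sim_Q$ is a $\tr$- and $\tl$-congruence, the same word computations for the three quandle axioms, and the same formula $\overline{f}[a,w]=\widehat{f}(w)^{-1}f(a)\widehat{f}(w)$, with uniqueness coming from building $[a,w]$ letter by letter from $\iota(S)$. Your version is in fact more complete than the paper's: you actually argue minimality (idempotency forces $(a,w)$ to be identified with $(a,a^{\pm 1}w)$), and you check that $\overline{f}$ is well defined and a homomorphism, both of which the paper only asserts.
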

\begin{proof}
	Suppose that $(a,w_1)\sim _{Q}(a,w_2)$ and $(b,v_1)\sim _{Q}(b,v_2)$ in $S\times F(S)$. 
	It follows that $w_{2}=a^{k}w_{1}$ and $v_{2}=b^{l}v_{1}$ for some $k,l\in \ZZ $, thus $(a,w_i)\tr (b,v_i)=(a,w_{i}\overline{v}_{i}bv_{i})$ and $w_{2}\overline{v}_{2}bv_{2}=a^{k}w_{1}\overline{v}_{1}bv_{1}$. 
	Therefore, $\sim _{Q}$ is a $\tr $-congruence and the operation $\tr $ is well-defined on the equivalence classes.
	Similarly, $\sim _Q$ is a $\tl $-congruence and $\tl$ is well-defined on the equivalence classes.
	For the remainder of the proof, we slightly abuse notation and perform all computations on representatives of the equivalence classes.

	To verify that $Q_S$ is a quandle, first observe that 
	$$((a,w) \tr (b,z)) \tl (b,z) = (a,w\overline{z}bz) \tl (b,z) = (a, w \overline{z} b z \overline{z} \overline{b} z) = (a,w)$$
	and that, likewise, $((a,w) \tl (b,z)) \tr (b,z) = (a,w)$, showing that the quandle axiom (2) holds.
	Another computation

	\begin{align*}
		\left ((a, w)\tr (c, z)\right )\tr \left ((b, v)\tr (c, z)\right ) &=(a, w\overline{z}cz)\tr (b,v\overline{z}cz) =(a,w\overline{z}cz\overline{z}\overline{c}z\overline{v}bv\overline{z}cz)=\\
		& =(a,w\overline{v}bv\overline{z}cz)=\left ((a,w)\tr (b,v)\right )\tr (c,z)
	\end{align*}
	takes care of the quandle axiom (3). For the quandle axiom (1), compute 
	$$(a,w)\tr (a,w)=(a,w\overline{w}aw)=(a,aw)\sim _{Q}(a,w)$$ 
	and observe that a quotient $(S\times F(S))/_{\sim }$ is a quandle exactly when the congruence $\sim $ contains $\sim _{Q}$.

	To verify the second claim of the Lemma, let $f$ be any map from the set $S$ to a group $G$ and denote by $\widehat{f}\colon F(S)\to G$ its extension to the free group over $S$. 
	Suppose $\overline{f}\colon Q_{S}\to conj(G)$ is a quandle homomorphism with $\overline{f}\circ \iota =f$.
	Then we have $\overline{f}(a,1)=f(a)$ and
	\begin{align*}
		\overline{f}\left ((a,w)\tr (b,z)\right )&=\overline{f}(a,w\overline{z}bz)\\
		\overline{f}(a,w)\tr \overline{f}(b,z)&=\overline{f}(b,z)^{-1}\overline{f}(a,w)\overline{f}(b,z)
	\end{align*}
	Identifying both expressions for $w=z=1$ yields $\overline{f}(a,b)=f(b)^{-1}f(a)f(b)$, and by extending this over $Q_S$, we obtain $\overline{f}(a,w)=\widehat{f}(\overline{w}aw)$. 
\end{proof}

\begin{definition}
	The quandle $Q_{S}=(S\times F(S))/_{\sim _{Q}}$ is called the \textit{free quandle} over the set $S$.
\end{definition}

The importance of the free quandle comes from the fact that it allows us to define presentations of quandles.

\begin{definition}
	Let $Q$ be a quandle. A \textit{presentation} $\langle X\mid R\rangle$ for $Q$, where $X$ is a set and $R \subset Q_X \times Q_X$,
	is an isomorphism between $Q$ and the quotient quandle $Q_X/{\sim_R}$, where $\sim_R$ is the smallest congruence on $Q_X$ containing $R$. 
	To simplify notations, we will often write $x=y$ instead of $(x,y) \in R$ and $\langle x_1,\ldots, x_m  \mid r_1,\ldots, r_n\rangle$  instead of $\langle \{x_1,\ldots, x_m\}\mid \{r_1,\ldots, r_n\}\rangle$.
	The cardinality of the smallest subset $S \subset Q$ such that $\langle S \rangle = Q$ is called the \textit{rank} of $Q$ and is denoted by $\rk(Q)$.
\end{definition}

Let us recall also the definition of the fundamental quandle of a codimension 2 submanifold of a topological manifold.
From now on, we will denote by $I$ the standard interval $[0,1]$.

\begin{definition}[\cite{FR}]
	Let $L\subset M$ be a properly embedded codimension 2 subma\-ni\-fold of a connected manifold $M$.
	Assume that $L$ is transversely oriented in $M$, denote by $N_{L}$ the normal disk bundle and by $E_{L}=\text{cl}(M-N_{L})$ its exterior. 
	Choose a basepoint $z\in E_L$.
	Define
	$$\Gamma_L := \frac{\mathcal{C}( (I, \{0\}, \{1\}) \to (E_L, \partial N_L, \{z\}))}{\text{homotopy}}\; ,$$
	the space of (continuous) paths from $\partial N_L$ to $z$ modulo homotopies fixing the endpoint $z$ and allowing the initial point to wander freely along $\partial N_L$.
	For any point $p\in \partial N_{L}$, denote by $m_{p}$ the loop in $\partial N_{L}$ based at $p$, which follows around the meridian of $L$ in the positive direction.
	The \textit{fundamental quandle} $Q(L)$ of $L$ is the set $\Gamma _{L}$ together with operation
	$$[\alpha]\tr [\beta] := [\alpha \cdot \overline{\beta }\cdot m_{\beta (0)}\cdot \beta ] \;,$$
	where $\overline{\beta}$ denotes the reversal of the path $\beta$ and $\cdot $ denotes the concatenation of paths.
\end{definition}

Different choices of basepoints give rise to isomorphic quandles.
It is clear from the definition that the fundamental quandle is an invariant up to ambient isotopy.

\begin{example}[Fundamental quandle of a link]
	A presentation of the fundamental quandle of a link $L$ in $\s^{3}$ may be obtained from any diagram of $L$; namely, every arc of the diagram corresponds to a generator and every crossing gives a crossing relation (see Figure~\ref{fig:quandle_crossing_relations}).
\end{example}

\begin{figure}[H]
	\begin{tikzpicture}
		\draw[black, thick][<-] (-1,-1) -- (1,1);
		\draw[black, thick] (-1,1) -- (-0.1,0.1);
		\draw[black, thick][->] (0.1,-0.1) -- (1,-1);
		\filldraw[black] (-0.6,0.5) node[anchor=east]{$x$};
		\filldraw[black] (-0.6,-0.5) node[anchor=east]{$y$};
		\filldraw[black] (0.6,-0.5) node[anchor=west]{$x\tr y$};

		\draw[black, thick][->] (3,1) -- (5,-1);
		\draw[black, thick][<-] (3,-1) -- (3.9,-0.1);
		\draw[black, thick] (4.1,0.1) -- (5,1);
		\filldraw[black] (4.6,0.5) node[anchor=west]{$x$};
		\filldraw[black] (4.6,-0.5) node[anchor=west]{$y$};
		\filldraw[black] (3.4,-0.5) node[anchor=east]{$x\tl y$};
	\end{tikzpicture}
	\caption{The quandle crossing relations}
	\label{fig:quandle_crossing_relations}
\end{figure}
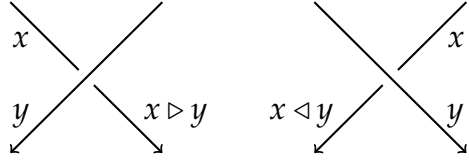

\subsection{Ribbon concordances}
The study of knot concordances represents a bridge between the classical knot theory and 4-manifold topology.

\begin{definition}
	A \textit{concordance} between two knots $K_{0}$ and $K_{1}$ is the image of a smooth embedding $f\colon (\s^{1}\times I, \s^1 \times \{0\}, \s^1 \times \{1\}) \to (\s^3 \times I, K_0 \times \{0\}, K_1 \times \{1\})$.
	The embedded annulus associated with a concordance will be denoted by $C=f(\s ^{1}\times I)\subset \s ^{3}\times I$. 
	We may assume that the projection $\s ^{3}\times I\to I$ restricted to $C$ is a Morse function, and if it has only critical points of index 0 and 1, $C$ is called a \textit{ribbon concordance} from $K_1$ to $K_0$. 
	A knot $K$ is called \textit{ribbon} if there exists a ribbon concordance from $K$ to the unknot.
\end{definition}

It is clear that concordance gives an equivalence relation on the set of all knots in the 3-sphere. 
Ribbon concordance of knots, however, is not symmetric. 
In his seminal paper~\cite{G}, Gordon introduced notation $K_{1}\geq K_0$ if there exists a ribbon concordance from $K_1$ to $K_0$, and conjectured relation $\geq $ gives a partial order on the set of all knots, which was confirmed forty years later in~\cite{A}.

\section{The concordance quandle}
\label{sec2}

\subsection{Fundamental quandle of an embedded surface}
\label{subsec2-1}

Knotted surfaces in the 4-space have been studied via several diagrammatic theories~\cite{CS,CKS2}.
One of them, introduced by Roseman~\cite{R}, represents a generic projection of an embedded surface into $\mathbb{R}^{3}$. 
The fundamental quandle of a closed knotted surface given by this presentation was described in~\cite{CKS1}. 
Since we will be interested in the fundamental quandle of a surface with boundary and would like to relate it with the fundamental quandles of its boundary components, a motion picture diagram of a knotted surface will suit our purposes better. 
We will describe the presentation of the fundamental quandle, based on its motion picture. 
This will imply a procedure to obtain a presentation of the fundamental quandle from the banded unlink diagrams or CH-diagrams, introduced by Lomonaco~\cite{L} and further developed by Yoshikawa~\cite{Y}.
%\linebreak

Let $F\subset \RR ^{4}$ be a compact orientable surface. 
Viewing $\RR ^{4}$ as the product $\RR ^{3}\times \RR $, denote by $\pi \colon \RR ^{4}\to \RR $ the projection to the last coordinate. 
$F$ may be isotoped into such position that
\begin{enumerate}
	\item every boundary component of $F$ is contained in a single fibre of $\pi $,
	\item $\pi $ restricts to a Morse function on $F$. 
\end{enumerate}

Every regular fibre of $\pi $ then intersects $F$ in a link, and by drawing the diagrams of subsequent regular fibres we obtain a motion picture diagram of the embedded surface. 
Observe that a finite sequence of stills (classical link diagrams) $\mathcal{M}=(\mathcal{D}_{1},\mathcal{D}_{2},\ldots ,\mathcal{D}_{n})$ is sufficient to fully describe the embedding of $F$.
Figure~\ref{fig2} shows the possible local transformations between two subsequent stills of the motion picture beside plane isotopy.
The first three transformations correspond to the passage over a critical point of the Morse function $\pi$. 
At a critical point of index 0, a 0-handle is added to $F$, which gives birth to a new circle in its boundary link (transformation (i)).
A critical point of index 1 corresponds to a saddle of $F$ where a 1-handle is added, changing the boundary link by connecting two arcs along a band (transformation (ii)). 
A critical point of index 2 corresponds to addition of a 2-handle along an unknotted link component, causing its death (transformation (iii)). 
The local transformations (iv)-(vi) correspond to the Reidemeister moves of the boundary link. 
Transformation (vii) corresponds to the addition of a component of $\partial F$ (that may be knotted), while transformation (viii) corresponds to the ending of $F$ in a (possibly knotted) component of $\partial F$.

\begin{figure}[H]
	\begin{tikzpicture}
		\matrix
		{
		& \node {
			\begin{tikzpicture}[scale=0.50]
				\draw[black,thick] (0,2.5) node[anchor=center]{(i)};
				\draw (-1.5,-1.5) rectangle (1.5,1.5);
				\draw (-1.5,-6.5) rectangle (1.5,-3.5);
				\draw[white,thick] (0,0) circle (1cm);
				\draw[black, thick][->] (0,-2) -- (0,-3);
				\draw[red,thick] (0,-5) circle (1cm);
			\end{tikzpicture}}; &
		\node {
			\begin{tikzpicture}[scale=0.50]
				\draw[black,thick] (0,2.5) node[anchor=center]{(ii)};
				\draw (-1.5,-1.5) rectangle (1.5,1.5);
				\draw (-1.5,-6.5) rectangle (1.5,-3.5);
				\draw[red, thick] (-1,1) .. controls (0,0.4) and (0,-0.4) .. (-1,-1) ;
				\draw[red, thick] (1,1) .. controls (0,0.4) and (0,-0.4) .. (1,-1);
				\draw[black, thick][->] (0,-2) -- (0,-3);
				\draw[red, thick] (1,-4) .. controls (0.4,-5) and (-0.4,-5) .. (-1,-4) ;
				\draw[red, thick] (1,-6) .. controls (0.4,-5) and (-0.4,-5) .. (-1,-6);
			\end{tikzpicture}}; & 
		\node {
			\begin{tikzpicture}[scale=0.50]
				\draw[black,thick] (0,2.5) node[anchor=center]{(iii)};
				\draw (-1.5,-1.5) rectangle (1.5,1.5);
				\draw (-1.5,-6.5) rectangle (1.5,-3.5);
				\draw[red,thick] (0,0) circle (1cm);
				\draw[black, thick][->] (0,-2) -- (0,-3);
				\draw[white,thick] (0,-5) circle (1cm);
			\end{tikzpicture}}; & 
		\node {
			\begin{tikzpicture}[scale=0.50]
				\draw[black,thick] (0,2.5) node[anchor=center]{(iv)};
				\draw (-1.5,-1.5) rectangle (1.5,1.5);
				\draw (-1.5,-6.5) rectangle (1.5,-3.5);
				\draw[red, thick] (-1,1) .. controls (0,0.4) and (0,-0.4) .. (-1,-1) ;
				\draw[black, thick][->] (0,-2) -- (0,-3);
				\draw[red, thick] (-1,-4) .. controls (1.7,-6.5) and (1.7,-4) .. (0.2,-4.8) ;
				\draw[red, thick] (0,-5) .. controls (-0.8,-5.7) .. (-1,-6);
			\end{tikzpicture}}; & 
		\node {
			\begin{tikzpicture}[scale=0.50]
				\draw[black,thick] (0,2.5) node[anchor=center]{(v)};
				\draw (-1.5,-1.5) rectangle (1.5,1.5);
				\draw (-1.5,-6.5) rectangle (1.5,-3.5);
				\draw[red, thick] (-1,1) .. controls (0,0.4) and (0,-0.4) .. (-1,-1) ;
				\draw[red, thick] (1,1) .. controls (0,0.4) and (0,-0.4) .. (1,-1);
				\draw[black, thick][->] (0,-2) -- (0,-3);
				\draw[red, thick] (-0.7,-4) .. controls (0.6,-4.6) and (0.6,-5.4) .. (-0.7,-6) ;
				\draw[red, thick] (0.1,-4.4) .. controls (0.5,-4.1) .. (0.6,-4);
				\draw[red, thick] (-0.2,-4.5) .. controls (-0.7,-4.7) and (-0.7,-5.3) .. (-0.2,-5.5);
				\draw[red, thick] (0.1,-5.6) .. controls (0.2,-5.6) .. (0.6,-6);
			\end{tikzpicture}}; & 
		\node {
			\begin{tikzpicture}[scale=0.50]
				\draw[black,thick] (0,2.5) node[anchor=center]{(vi)};
				\draw (-1.5,-1.5) rectangle (1.5,1.5);
				\draw (-1.5,-6.5) rectangle (1.5,-3.5);
				\draw[red, thick] (-1,-1) .. controls (-0.7,-0.9) .. (-0.5,-0.4) ;
				\draw[red, thick] (-0.4,-0.2) .. controls (-0.1,0.5) .. (1,1);
				\draw[red, thick] (-1,0) .. controls (-0.2,-0.5) and (0.2,-0.5) .. (1,0);
				\draw[red, thick] (-1,1) .. controls (-0.7,0.6) .. (-0.2,0.4);
				\draw[red, thick] (1,-1) .. controls (0.7,-0.9) .. (0.5,-0.4) ;
				\draw[red, thick] (0.4,-0.2) .. controls (0.3,0.3) .. (0.1,0.4) ;
				\draw[black, thick][->] (0,-2) -- (0,-3);
				\draw[red, thick] (-1,-5) .. controls (-0.2,-4.3) and (0.2,-4.3) .. (1,-5);
				\draw[red, thick] (-1,-6) .. controls (-0.7,-5.9) .. (0.4,-4.7) ;
				\draw[red, thick] (0.5,-4.5) .. controls (0.7,-4.1) .. (1,-4);
				\draw[red, thick] (-1,-4) .. controls (-0.7,-4.1) .. (-0.5,-4.5);
				\draw[red,thick] (-0.4,-4.65) .. controls (-0.2,-4.9) ..  (-0.1,-5);
				\draw[red, thick] (1,-6) .. controls (0.7,-5.9) .. (0.05,-5.2) ;
			\end{tikzpicture}}; & 
		\node {
			\begin{tikzpicture}[scale=0.50]
				\draw[black,thick] (0,2.5) node[anchor=center]{(vii)};
				\draw (-1.5,-1.5) rectangle (1.5,1.5);
				\draw (-1.5,-6.5) rectangle (1.5,-3.5);
				\draw[white,thick] (0,0) circle (1cm);
				\draw[black, thick][->] (0,-2) -- (0,-3);
				\draw[red,thick] (0,-5) node[anchor=center]{$K$};
			\end{tikzpicture}}; & 
		\node {
			\begin{tikzpicture}[scale=0.50]
				\draw[black,thick] (0,2.5) node[anchor=center]{(viii)};
				\draw (-1.5,-1.5) rectangle (1.5,1.5);
				\draw (-1.5,-6.5) rectangle (1.5,-3.5);
				\draw[red,thick] (0,0) node[anchor=center]{$K$};
				\draw[black, thick][->] (0,-2) -- (0,-3);
				\draw[white,thick] (0,-4) circle (1cm);
			\end{tikzpicture}}; \\
		};
	\end{tikzpicture}
	\caption{Motion picture transformations}
	\label{fig2}
\end{figure}
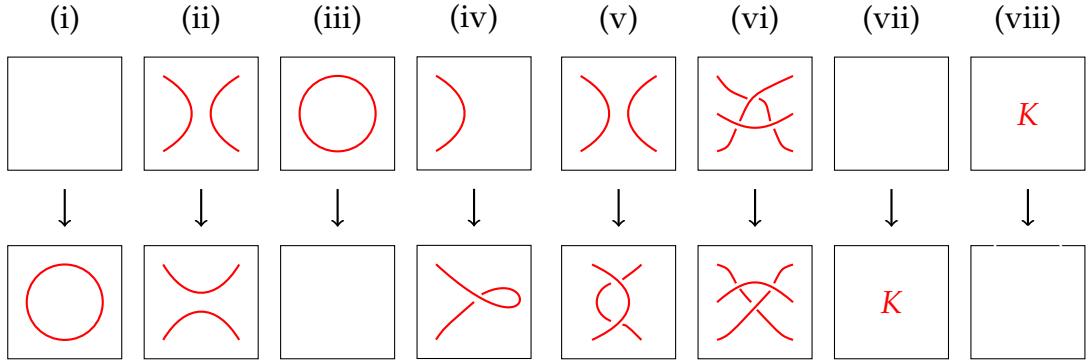

\begin{lemma} 
	\label{lemma1} 
	Let $L$ be a link in a 3-manifold $M$. 
	Consider $L\times I$ as a properly embedded surface in $M\times I$. 
	The inclusion of pairs $(M\times \{0\},L\times \{0\})\to (M\times I,L\times I)$ induces an isomorphism of the fundamental quandles $Q(L)$ and $Q(L\times I)$. 
\end{lemma}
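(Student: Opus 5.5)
The plan is to compare the two fundamental quandles directly through the topological definition, using the product structure throughout. I will choose the normal disk bundle of $L\times I$ in $M\times I$ to be $N_{L}\times I$, where $N_L$ is a normal disk bundle of $L$ in $M$. Then the exterior is $E_{L\times I}=E_L\times I$ and $\partial N_{L\times I}$ contains $\partial N_L\times I$. Choose the basepoint as $(z,0)$ with $z\in E_L$. The transverse orientation of $L\times I$ is taken to be the one induced from that of $L$, so that the meridian loop $m_{(p,t)}$ is $m_p\times\{t\}$.

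The inclusion $j\colon E_L\to E_L\times I$, $x\mapsto (x,0)$, sends $\partial N_L$ into $\partial N_L\times\{0\}$ and $z$ to $(z,0)$. So it induces a map $j_*\colon\Gamma_L\to\Gamma_{L\times I}$, $[\alpha]\mapsto[j\circ\alpha]$, which is well defined on homotopy classes. Since $j$ carries concatenation to concatenation, reversal to reversal, and $m_{\beta(0)}$ to $m_{j\beta(0)}$, the formula $[\alpha]\tr[\beta]=[\alpha\cdot\overline{\beta}\cdot m_{\beta(0)}\cdot\beta]$ is preserved, and likewise for $\tl$. Hence $j_*$ is a quandle homomorphism.

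For bijectivity I will use the projection $r\colon E_L\times I\to E_L$, $(x,t)\mapsto x$. This is a retraction with $r\circ j=\mathrm{id}$, and it maps $\partial N_L\times I$ onto $\partial N_L$ and $(z,0)$ to $z$.

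One subtlety must be handled first. A path in $E_{L\times I}$ is required to start on $\partial N_{L\times I}$, and this boundary may also include pieces of $\partial(M\times I)$. The relevant part is the boundary of the tubular neighbourhood, $\partial N_L\times I$, and I will interpret the definition that way, as is implicit in the definition of $\Gamma$. Granting this, $r$ induces $r_*\colon\Gamma_{L\times I}\to\Gamma_L$, and $r_*\circ j_*=\mathrm{id}$ immediately.

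For the other composition, take a path $\gamma=(\gamma_1,\gamma_2)$ from $\partial N_L\times I$ to $(z,0)$. The straight-line homotopy $H_s(u)=(\gamma_1(u),(1-s)\gamma_2(u))$ connects $\gamma$ to $j\circ r\circ\gamma$. It keeps the endpoint at $(z,0)$, and the initial point stays in $\partial N_L\times I$ throughout, which is an allowed homotopy. Thus $j_*\circ r_*=\mathrm{id}$, and $j_*$ is a bijective quandle homomorphism, hence an isomorphism.

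The main obstacle is the bookkeeping in the definition rather than any hard topology. I need to make sure the normal bundle and exterior of $L\times I$ may be taken to be products. For this I will use uniqueness of tubular neighbourhoods, together with the remark that different choices give isomorphic quandles. I also need to make sure that the initial-point constraint is preserved by the deformation retraction, which holds because the contraction $t\mapsto(1-s)t$ acts only on the $I$-factor.
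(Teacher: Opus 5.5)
Your proposal is correct and is essentially the paper's proof: the paper's injectivity step projects the homotopy via $(H_M(t,s),0)$, which is exactly your identity $r_*\circ j_*=\mathrm{id}$, and its surjectivity step uses the same straight-line homotopy $(\gamma_M(t),(1-s)\gamma_I(t))$ that gives your $j_*\circ r_*=\mathrm{id}$. You are more explicit than the paper in fixing $N_{L\times I}=N_L\times I$, which the paper uses tacitly, and your boundary ``subtlety'' is harmless in any case: every point of $\partial N_{L\times I}$ that lies in $E_{L\times I}=E_L\times I$ already belongs to $\partial N_L\times I$.
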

\begin{proof}
	We identify $(M, L)$ with $(M \times \{0\}, L \times\{0\})$ and thus regard it as a subset of $(M \times I, L \times I)$.
	Let $N_L$ and $N_{L \times I}$ be normal disk bundles of $L$ (in $M$) and $L\times I$ (in $M\times I$), respectively, and
	let $E_L = \text{cl}(M - N_L)$ and $E_{L \times I} = \text{cl}(M \times I - N_{L \times I})$.
	Choose a basepoint $z \in E_L\subset E_{L \times I}$ that defines the fundamental quandles $Q(L)$ and $Q(L \times I)$
	(other choices of basepoint give rise to isomorphic quandles).
	Every path from $\partial N_L$ to $z$ is also a path from $\partial N_{L \times I}$ to $z$, and every homotopy identifying two representatives of an element of $Q(L)$ identifies these representatives in $Q(L\times I)$,
	giving us a well-defined mapping $\iota: Q(L) \to Q(L \times I)$. 
	It follows directly from the definition that $\iota$ is a quandle homomorphism.
	
	Suppose $\alpha$ and $\beta$ are two paths representing elements of $Q(L)$ such that $\iota([\alpha]) = \iota([\beta])$,
	meaning that there exists a homotopy $H\colon I \times I \to E_{L \times I} \subset M \times I$ between $\alpha$ and $\beta$ with $H(\{0\}\times I)\subset \partial N_{L\times I}$ and $H(\{1\}\times I)=\{z\}$. 
	Let $H(t,s) = (H_M(t,s), H_I(t,s))$. Then $\bar{H}: I \times I \to E_L$, defined by $\bar{H}(t,s) = (H_M(t,s), 0)$, 
	is a homotopy between $\alpha$ and $\beta$ in $E_L$ with $\bar{H}(\{0\}\times I)\subset \partial N_{L}$ and $\bar{H}(\{0\}\times I) = \{z\}$. 
	Therefore, $\alpha$ and $\beta$ represent the same element of $Q(L)$, showing that $\iota$ is injective.

	For a path $\gamma(t) = (\gamma_M(t), \gamma_I(t)) \subset E_{L\times I} \subset M \times I$, representing an element of $Q(L \times I)$,
	we can define a homotopy $H_{\gamma}:I \times I \to E_{L\times I}$ from $\gamma$ to a path representing an element in $Q(L)$ by
    $H_{\gamma}(t,s) = \left (\gamma_M(t), (1-s)\gamma_I(t)\right )$,
	showing that $\iota$ is surjective and thus an isomorphism.
\end{proof}

Suppose an embedded surface $F\subset \RR^{4}$ is given by a motion picture diagram $\mathcal{M}=(\mathcal{D}_{1},\mathcal{D}_{2},\ldots ,\mathcal{D}_{n})$. 
The motion picture $\mathcal{M}$ captures the diagrams of the regular fibres $\pi ^{-1}(t_{1}), \pi ^{-1}(t_2), \ldots ,\pi ^{-1}(t_{n})$ of the Morse function $\pi |_{F}\colon F\to \RR $. 
If a neighbourhood $(a,b)$ of the regular value $t_i$ contains no critical values, then $\pi ^{-1}(a,b)\cap F\cong L_{i}\times I$, where $L_i$ is the link in $\RR ^3$ given by the diagram $\mathcal{D}_i$. 
By Lemma~\ref{lemma1}, the fundamental quandle of $\pi ^{-1}(a,b)\cap F$ may then be given by the presentation $\langle X_{i}\mid R_{i}\rangle $ of the classical link quandle $Q(L_i)$, which has a generator for every arc of $\mathcal{D}_i$ and a crossing relation for every crossing of $\mathcal{D}_i$.

A presentation of the fundamental quandle $Q(F)$ is then obtained from $\mathcal{M}$ by the following procedure: \label{procedure}
\begin{enumerate}
	\item We start by the presentation $\langle X_{1}\mid R_{1}\rangle $ of the fundamental quandle of the link $L_{1}$, represented by the first non-empty motion picture diagram. 
	\item At every transformation of type (i), the presentation $\langle X\mid R\rangle$ transforms to $\langle X\sqcup \{x\}\mid R\rangle $. The new generator $x$ represents the homotopy class of a path from the torus neighbourhood of the newborn circle to the basepoint. 
	\item At every transformation of type (ii), the added 1-handle allows a homotopy between the generators $a$ and $b$, corresponding to the two arcs on the upper picture. A presentation $\langle X\mid R\rangle$ thus transforms to $\langle X\mid R\sqcup \{a=b\}\rangle $.
	\item At a type (iii) transformation, the generator of the fundamental quandle of the dying circle becomes the generator of the fundamental quandle of the whole disc (the 2-handle of $F$), which does not affect the presentation. 
	\item Type (iv), (v) and (vi) transformations do not affect the presentation, as the surface between two links connected by any of these moves is ambient isotopic to a product of one of the links and an interval.
	\item At a type (vii) transformation, the presentation $\langle X\mid R\rangle $ transforms to $\langle X\sqcup X_{K}\mid R\sqcup R_{K}\rangle $, where $\langle X_K\mid R_K\rangle $ denotes the presentation of the fundamental quandle $Q(K)$.
	\item A type (viii) transformation does not affect the presentation.
	\item Once we have applied the above rules to every transformation $\mathcal{D}_{i}\to \mathcal{D}_{i+1}$ for $i=1,2,\ldots ,n-1$, we obtain a presentation for the fundamental quandle $Q(F)$. 
\end{enumerate}

It is a well-known fact that two motion picture diagrams represent ambient isotopic surfaces exactly when they are related by a finite sequence of
movie moves and interchanges of levels of distant critical points. One can easily check that the result of the above construction is
well-defined up to isomorphism with respect to these changes.

The information given by a motion picture diagram of an embedded surface may under certain conditions be given in a more compact form, 
replacing the whole motion picture by a single picture. This idea was first proposed by Fox, and later elaborated by Lomonaco~\cite{L} and Yoshikawa~\cite{Y}.

\begin{definition} 
	\label{def1} 
	A Morse function $\pi \colon \RR ^{4}\to \RR $ is called a \textit{hyperbolic splitting} of an embedded surface $F\subset \RR ^{4}$ if it satisfies the following conditions:
	\begin{enumerate}
		\item $\pi _{F}$ is also Morse,
		\item all minima of $\pi _{F}$ occur in the level $\pi ^{-1}(-1)$,
		\item all maxima of $\pi _{F}$ occur in the level $\pi ^{-1}(1)$,
		\item all hyperbolic points of $\pi _{F}$ occur in the level $\pi ^{-1}(0)$.
	\end{enumerate}
\end{definition}
Every embedded surface in $\RR ^4$ admits a hyperbolic splitting~\cite{L}. 
We denote by $F_{t}=F\cap \pi ^{-1}(t)$ the $t$-section of the knotted surface, induced by $\pi $, and similarly denote $\RR ^{4}_{t}=\pi ^{-1}(t)$.

A hyperbolic splitting $\pi $ of a knotted surface $F$ may be presented by a marked graph diagram. 
By Definition~\ref{def1}, the 0-section $F_{0}$ defines an embedded 4-valent graph, with vertices corresponding to critical points of index 1 of the Morse function $\pi _{F}$. 
A planar diagram of $F_{0}\subset \RR ^{4}_{0}$ has two kinds of vertices: beside crossing vertices, this diagram contains vertices corresponding to saddles, endowed with markers. 
A marker at a vertex determines the corresponding resolutions below and above the critical point, see Figure~\ref{fig4}. 
We call this diagram a \textit{marked graph diagram} or a \textit{CH-diagram} of $F$ with the hyperbolic splitting $\pi $.
Observe that if $F$ is closed, then for a small $\epsilon >0$ the sections $F_{-\epsilon }$ and $F_{\epsilon }$ are both unlinks.

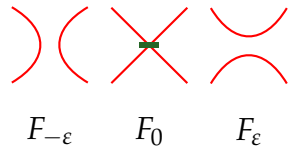
\begin{figure}[H]
	\begin{tikzpicture}
		\matrix
		{
		& 
		\node {
			\begin{tikzpicture}[scale=0.5]
				\draw[black,thick] (0,-1.6) node[anchor=north]{$F_{-\epsilon}$};
				\draw[red, thick] (-1,1) .. controls (0,0.4) and (0,-0.4) .. (-1,-1) ;
				\draw[red, thick] (1,1) .. controls (0,0.4) and (0,-0.4) .. (1,-1);
			\end{tikzpicture}}; & 
                     \node {
			\begin{tikzpicture}[scale=0.5]
				\draw[black,thick] (0,-1.6) node[anchor=north]{$F_{0}$};
				\draw[red, thick] (1,1) -- (-1,-1) ;
				\draw[red, thick] (1,-1) -- (-1,1);
                                          \fill[dark-green,thick] (0.25,0.07) rectangle (-0.25,-0.07);
			\end{tikzpicture}}; & 
	          \node {
			\begin{tikzpicture}[scale=0.5]
				\draw[black,thick] (0,-1.6) node[anchor=north]{$F_{\epsilon }$};
				\draw[red, thick] (1,1) .. controls (0.4,0) and (-0.4,0) .. (-1,1) ;
				\draw[red, thick] (1,-1) .. controls (0.4,0) and (-0.4,0) .. (-1,-1);
			\end{tikzpicture}}; \\
                       };
               \end{tikzpicture}
\caption{The resolutions of a marker below (left) and above the critical point (right)  }
\label{fig4}
\end{figure}

\begin{corollary} 
	Let $\mathcal{D}$ be a marked graph diagram of an embedded surface $F\subset \RR^{4}$ with $m$ markers. 
	Denote by $\mathcal{D}_{-}$ the diagram of the link $F_{-\epsilon }$, obtained by the lower resolution of $\mathcal{D}$ at all markers, and let $\langle X_{\mathcal{D}_{-}}\mid R_{\mathcal{D}_{-}}\rangle $ be the presentation of its fundamental quandle, given by $\mathcal{D}_{-}$. 
	Let $x_{i}$ and $y_i$ denote the two arcs of $\mathcal{D}_{-}$ that meet at the $i$-th marker of $\mathcal{D}$. 
	Then the fundamental quandle $Q(F)$ has a presentation
	\begin{align}
		\label{pres1}
		\left \langle X_{\mathcal{D_{-}}}\mid R_{\mathcal{D}_{-}}\sqcup \cup _{i=1}^{m}\{x_{i}=y_{i}\}\right \rangle \;.
	\end{align}
\end{corollary}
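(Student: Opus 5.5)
We read the marked graph diagram $\mathcal{D}$ as a compressed motion picture and apply the procedure described above, still by still. Since $\pi$ is a hyperbolic splitting, the motion picture consists of three stages:
\begin{itemize}
\item in the level $\pi^{-1}(-1)$ all minima occur simultaneously;
\item the surface is then a product up to the level $-\epsilon$, possibly with Reidemeister-type isotopies;
\item in the level $\pi^{-1}(0)$ all saddles occur, and the surface is again a product up to the level $1$, where all maxima occur.
\end{itemize}
By perturbing $\pi$ slightly we may separate the critical values inside each level. This gives a genuine motion picture $\mathcal{M}$ with transformations of types (i), (iv)--(vi) first, then $m$ transformations of type (ii), then (iv)--(vi) and (iii). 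Since the procedure is invariant under interchanging levels of distant critical points, the result does not depend on the separation we choose.

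Processing the first stage gives the following. Starting from the empty presentation (or from the first nonempty still), each birth of type (i) adds a free generator. Each Reidemeister move of types (iv)--(vi) changes the presentation only up to isomorphism, as in the classical case; by rule (5) it does not change the quandle. Hence at level $-\epsilon$ the accumulated quandle is $Q(F\cap \pi^{-1}(-\infty,-\epsilon])$. By Lemma~\ref{lemma1} applied to the product collar below $-\epsilon$, this quandle is isomorphic to $Q(F_{-\epsilon})$. We may therefore replace the presentation by the diagrammatic one, $\langle X_{\mathcal{D}_{-}}\mid R_{\mathcal{D}_{-}}\rangle$. This uses that $Q$ of a disjoint union of trivial discs, built through isotopy, matches the link presentation. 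The point to argue is precisely that the inclusion of the collar $F_{-\epsilon}\times[-\epsilon-\delta,-\epsilon]$ induces an isomorphism onto the quandle of the part of $F$ below $-\epsilon$. I would justify this by noting that the part below $-\epsilon$ is a union of discs properly embedded in a half-space; the procedure itself asserts that (i) adds a generator, and the isotopies identify the resulting generators with the arcs of $\mathcal{D}_{-}$.

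Next, the $m$ saddles at level $0$ are transformations of type (ii). The $i$-th saddle joins the two arcs $x_i, y_i$ of $\mathcal{D}_{-}$ meeting at the $i$-th marker. Applying rule (3) once for each marker adds the relations $x_i=y_i$, which gives presentation~\eqref{pres1}. One subtlety is that after the first saddles the arcs at later markers are still the same arcs of $\mathcal{D}_{-}$. This holds because the band moves are local and disjoint, so no diagram arcs are re-labelled away from the markers.

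Finally, above level $0$ only Reidemeister moves and deaths of type (iii) occur, together with type (viii) if $F$ has upper boundary. By rules (4), (5) and (7) none of these changes the presentation, so \eqref{pres1} is the presentation of $Q(F)$.

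The main obstacle is the rigorous justification of rule (3) and rule (4) topologically, via van Kampen-type gluing for quandles. However, since the procedure on page~\pageref{procedure} is taken as established, the corollary reduces to the bookkeeping above. The only care needed is the level-separation and collar argument at $-\epsilon$.
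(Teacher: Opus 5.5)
Your proposal is correct and follows the paper's argument: the paper's proof reads the marked graph diagram as the motion picture of the hyperbolic splitting and applies the procedure, so that only the $m$ saddles contribute, each adding a relation $x_i=y_i$ to the diagrammatic presentation of $\mathcal{D}_{-}$. Your additional bookkeeping makes explicit what the paper leaves implicit, namely separating the critical levels, identifying the quandle at level $-\epsilon$ with $\langle X_{\mathcal{D}_{-}}\mid R_{\mathcal{D}_{-}}\rangle$ via the collar and Lemma~\ref{lemma1}, and checking that saddles at distinct markers do not interfere.
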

\begin{proof} 
	The marked graph diagram corresponds to a hyperbolic splitting of the embedded surface $F$. 
	Applying the procedure on page \pageref{procedure} to the motion picture of this hyperbolic splitting, we obtain the presentation \eqref{pres1}.
\end{proof}

\subsection{Fundamental quandle of a ribbon concordance}
\label{subsec2-2}

A ribbon concordance from the knot $K_1$ to $K_0$ is an embedded annulus $C\subset \s ^{3}\times I$ with two knotted boundary components 
$K_{i}=C\cap \left (\s ^{3}\times \{i\}\right )$, whose projection to $I$ is a Morse function without critical points of index 2.
Since the Euler characteristic of $C$ is trivial, the number of 0-handles equals the number of 1-handles. Any ribbon concordance can thus
be represented by a series of ribbon concordances between intermediate knots which have a single 0-handle,
which births a new component, and a single 1-handle, which connects the newborn component to the preceding intermediate knot.
This gives us an inductive approach to dealing with ribbon concordances, having to account for at most one 0-handle and 1-handle pair at a time.

We restate the main theorem of this paper, analogous to lemma~\ref{lemma:gordon}.

\begin{thm_again}{thm:ribbon_conc_quandle}{}
	Let $C\subset \s ^{3}\times I$ be a ribbon concordance from a knot $K_1$ to a knot $K_0$, where $K_{i}\in \s ^{3}\times \{i\}$. 
	Then the induced quandle homomorphism $Q(K_{0})\to Q(C)$ is injective, and $Q(K_{1})\to Q(C)$ is surjective.
\end{thm_again}

To prove the theorem, we will need the following lemma.

\begin{lemma}
	\label{lemma:ribon_conc_quandle}
	Let $C_0\subset \s ^{3}\times [-1, 0]$ be a ribbon concordance from a knot $K_{0}$ to a knot $K_{-1}$,
	and let $C_1 \subset \s^3 \times [0,1]$ be a ribbon concordance from a knot $K_1$ to knot $K_0$, 
	where $K_{i}\in \s ^{3}\times \{i\}$.
	Assume the projection of $C_0$ and $C_1$ to $[-1,1]$ is a Morse function and that the projection of $C_1$ to $[0,1]$ has a single critical point of index 0 and a single
	critical point of index 1. Let $C = C_0 \cup C_1$ be a ribbon concordance from knot $K_1$ to knot $K_{-1}$.
	Then the quandle homomorphism $\iota \colon Q(C_0)\to Q(C)$, induced by inclusion, is injective.
\end{lemma}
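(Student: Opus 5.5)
Using the procedure on page~\pageref{procedure}, I will write $Q(C)$ as $Q(C_0)$ plus one new generator and one new relation. Read the motion picture of $C$ upward from $t=-1$; the part over $[-1,0]$ produces a presentation $\langle X_0\mid R_0\rangle$ of $Q(C_0)$. Passing $t=0$ changes nothing (Lemma~\ref{lemma1}). In $C_1$, the index~0 critical point (transformation (i)) adds one generator $x$, the class of a meridional path of the newborn trivial circle $U$. The index~1 critical point (transformation (ii)) then adds one relation $a=x'$. Here $a$ is a generator of the arc of $K_0$ touched by the band. Also, $x'$ is the arc of $U$ at the band foot, expressed in the current presentation.

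The key difficulty is that between the birth and the saddle, Reidemeister moves (iv)--(vi) carry $U$ and $K_0$ through each other. So the arc of $U$ at the band foot need not be the original generator $x$ in the diagram. To fix this, I use the level-interchange freedom: move the birth down to just above $t=0$, drawn as a small unknotted circle far from $K_0$. Then isotope the band (rather than $U$), so that $U$ stays a single arc labelled $x$. The band's crossings with $K_0$ and with itself do not enter the quandle presentation except through the two arcs it connects. Its foot on $U$ is the arc $x$, and its other foot is some arc $a$ of the level-$0$ diagram of $K_0$, which lies in $X_0$. This gives
\[
Q(C)\cong\langle X_0\sqcup\{x\}\mid R_0\sqcup\{x=a\}\rangle ,
\]
with $\iota$ sending each generator of $X_0$ to itself. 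I must check that the movie with a straightened $U$ and a knotted band describes an isotopic surface. This follows because any band attached to a split unknot can be represented this way, since the handle decomposition of $C_1$ is determined by the band up to isotopy.

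The new generator $x$ appears in no relation other than $x=a$. So Tietze-type elimination, substituting $a$ for $x$ (valid since $\sim_R$ is the smallest congruence containing the relations), yields an isomorphism $\langle X_0\sqcup\{x\}\mid R_0\sqcup\{x=a\}\rangle\cong\langle X_0\mid R_0\rangle$. Its inverse is $\iota$, because the retraction $x\mapsto a$, identity on $X_0$, is a well-defined homomorphism $r$ with $r\circ\iota=\mathrm{id}$. Hence $\iota$ is injective, indeed bijective.

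The main obstacle is the second step: justifying rigorously that the birth circle can be kept as a single unlinked arc. The presentation from the procedure depends only on the isotopy class of $C$, and the interchange of distant critical points together with sliding the band carries the needed isotopy. If this fails, the fallback is to define $r$ topologically. Collapse the 0-handle and 1-handle of $C_1$, which together form a disc attached along an arc, giving a deformation retraction of $E_C$ onto a space containing $E_{C_0}$. Then $r$ is induced on path classes, with $r\circ\iota=\mathrm{id}$.
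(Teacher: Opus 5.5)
\textbf{Gap in the main step.} The presentation $Q(C)\cong\langle X_0\sqcup\{x\}\mid R_0\sqcup\{x=a\}\rangle$ is false in general, and so is your conclusion that $\iota$ is bijective. It rests on the claim that the band's crossings ``do not enter the quandle presentation except through the two arcs it connects''. Rule (ii) of the procedure, and the marked-graph corollary, only identify arcs that are \emph{adjacent} at the saddle. A long band must first be made local, by dragging $U$ along its core or by sliding the initial point of the path representing $x$ across the $1$-handle. Either way, each arc $y$ that the core passes \emph{under} conjugates the label. The relation therefore reads $a=(\cdots(x\tr^{\epsilon_1}y_1)\cdots)\tr^{\epsilon_n}y_n$, with $\tr^{-1}:=\tl$ and $y_j\in X_0\sqcup\{x\}$.

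Only the band's self-crossings cancel: the two parallel strands of the dragged finger carry the same label $v$ with opposite orientations, contributing $\tr v$ and then $\tl v$. Keeping $U$ a small circle is legitimate, but it does not remove the terms $y_j=x$. These record the band running through the spanning disc of $U$, the typical ribbon situation, which cannot be undone without a crossing change that alters $K_1$. With $x$ inside the word, no Tietze elimination is possible.

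A concrete counterexample: let $C_0$ be a product on the unknot, and let $C_1$ be the standard ribbon disc of $T\#\overline{T}$ ($T$ a trefoil; two minima, one saddle) with one minimum removed. Then $\pi_1(E_C)$ is the trefoil group, which is nonabelian. Hence $Q(C)\cong H\backslash\pi_1(E_C)$, where $H$ is the abelian image of $\pi_1(\partial N_C)$, has more than one element. But $Q(K_0)$ is a single point, and your presentation $\langle a,x\mid x=a\rangle$ also collapses to a point.

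\textbf{The fallback does not repair this.} Collapsing $D$ and $B$ deformation retracts $C$, and hence $N_C$ onto $N_{C_0}$; it does not retract the exterior. In $E_C$ the same handles appear as a $4$-dimensional $1$-handle and $2$-handle attached along the word above. A map $E_C\to E_{C_0}$ fixing $E_{C_0}$ and sending $\partial N_C$ into $\partial N_{C_0}$ would give a quandle retraction $r$. That requires solving the above equation for $r(x)$ inside $Q(C_0)$ itself, which is stronger than injectivity, and nothing in your argument supplies it.

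The paper uses the collapse only as $R\colon N_C\times I\to N_{C_0}$. It uses $R$ to move the boundary track $H(0,s)$ of a given homotopy from $\partial N_C$ into $\partial N_{C_0}$, steering clear of the disc bundles over $(0,1)_D\times\{1\}_D$. Homotopy extension then gives a homotopy in $E_C$ whose whole boundary lies in $E_{C_0}$, which the paper finally brings into $E_{C_0}$.

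That last move carries the entire difficulty. Since $\pi_1(\partial N_{C_0})\to\pi_1(\partial N_C)$ is an isomorphism, the lemma is equivalent to $\ker\bigl(\pi_1(E_{C_0})\to\pi_1(E_C)\bigr)$ lying in the peripheral subgroup of $\pi_1(E_{C_0})$. This is a Gordon-type injectivity, which Gordon obtained for $\pi_1$ via residual finiteness and the Gerstenhaber--Rothaus theorem. So no presentation-level shortcut can give it for free. The paper's closing ``projection onto $E_{C_0}$'' likewise has to be backed by an argument of that strength.
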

\begin{proof} 
	We begin by establishing some notation. 
	We may assume that the ribbon concordance $C_1$ ends at the exact moment when the 1-handle has been attached (it doesn't meaningfully change after that).
	Then $C_1$ consists of a part that is ambient isotopic to $K_0 \times I$ (so that is what we will call it and regard it as), a disk $D$
	and a band $B$, connecting $K_0 \times I$ to $D$ (see Figure~\ref{fig:ribbon_conc} for an example). Define normal disk bundles
	$N_{C_0} \subset \s^3 \times [-1,0]$ and $N_C \subset \s^3 \times [-1,1]$ about $C_0$ and $C$, respectively,
	along with their respective exteriors $E_{C_0} \subset \s^3 \times [-1,0]$ and $E_C \subset \s^3 \times [-1,1]$.
	Notice $N_{C_0}  \subset N_C$ and $E_{C_0} \subset E_C$.
	Lastly, choose a basepoint $z \in E_{C_0}$ for which we define fundamental quandles $Q(C_0)$ and $Q(C)$.

	We can regard both disk $D$ and band $B$ as the product $I \times I$. For clarity's sake we write $D = I_D \times I_D$ and $B = I_B \times I_B$, where $B$ is attached
	to some arc of $K_0 \times {1}$ {along} $I_B \times \{0\}$, and is attached to $I_D \times \{0\} \subset D$ {along} $I_B \times \{1\}$ in the natural way.
	We have a series of strong deformation retractions
	{ \small
	$$C_1 = (K_0 \times I) \cup B \cup D \overset{I_D \times I_D \retract I_D \times \{0\}}{\longrightarrow} (K_0 \times I) \cup B 
	\overset{I_B \times I_B \retract I_B \times \{0\}}{\longrightarrow} K_0 \times I
	\overset{K_0 \times I \retract K_0 \times \{0\}}{\longrightarrow} K_0$$
	}
	which, in turn, give us a strong deformation retraction $R \colon N_C \times I \to N_{C_0}$.

	Since multiple copies of the interval $I$ with distinct meanings will appear from now on, we index them by the variable we use for them; $t$ for $I_t$, $s$ for $I_s$ and $r$ for $I_r$.
	Let $[\alpha], [\beta] \in Q(C_0)$ be such that $\iota([\alpha]) = \iota([\beta]) \in Q(C)$. By definition, $\alpha$ and $\beta$ are paths 
	$(I_t, \{0\}_t, \{1\}_t) \to (E_{C_0}, \partial N_{C_0}, \{z\})$, and we have a homotopy $H\colon I_t \times I_s \to E_{C}$ such that $H(t,0) = \alpha(t)$, $H(t,1) = \beta(t)$,
	$H(0,s) \in \partial N_C$ and $H(1,s) = z$.
	Fixing $t=0$, $H(0,s)$ defines a path in $\partial N_{C}$ from $\alpha(0)$ to $\beta(0)$. The retraction $R \colon N_C \times I_r \to N_{C_0}$ then defines 
	a homotopy $\widetilde{H}_0\colon \{0\}_t \times I_s \times I_r \to \partial N_{C} \subset E_C$,
	where $\widetilde{H}_0(0, s, 0) =  H(0,s)$ and $\widetilde{H}_0(0,s,1)$ is some path in $\partial N_{C_0}$ from $\alpha(0)$ to $\beta(0)$.
	Note that, a priori, the retraction $R$ need not map $H(0,s)$ entirely to $\partial N_C$ for every $r$; it may happen that $R(H(0,s),r) \in \text{int}(N_C)$ for some $s$ and $r$!
	It is clear from the relatively simple definition of $R$ that this happens exactly when $H(0,s)$ moves over a part of $\partial N_C$ defined by the boundary of the disk bundle of some point in $(0,1)_D \times \{1\}_D$.
	To avoid this, we can compose $R$ with a homotopy that first moves $H(0,s)$ along $\partial N_C$ 
	into a favourable position (outside the disk bundles of points in $(0,1)_D \times \{1\}_D$) where this doesn't happen, thus obtaining $\widetilde{H}_0$ as desired
	(see figure~\ref{fig:conc_nbh}).
	Define also $\widetilde{H}_0(1,s,r) = z$.
	Now, since $(I_t \times I_s, \{0,1\}_t \times I_s )$ is a CW-pair, it has the homotopy extension property. 
	Thus, the homotopy $\widetilde{H}_0$ can be
	extended to a homotopy $\widetilde{H}\colon I_t \times I_s \times I_r \to E_C$ such that 
	$\widetilde{H}(0,s,r) = \widetilde{H}_0(0,s,r) \in  \partial N_C, \widetilde{H}(1,s,r) = \widehat{H}_0(1,s,r) = z$ and $\widetilde{H}(t,s,0) = H(t,s)$.
	As such, $\widetilde{H}(t,s,1)$ is then a homotopy in $E_C$ between $\alpha$ and $\beta$ such that $\widetilde{H}(0,s,1) \subset \partial N_{C_0}$ and $\widetilde{H}(1,s,1) = z$.
	Projecting $\widetilde{H}(t,s,1)$ onto $E_{C_0}$ then gives us the desired homotopy between $\alpha$ and $\beta$,
	showing that $[\alpha] = [\beta]$ as elements of $Q(C_0)$.
\end{proof}

\begin{figure}[ht]
	\centering
	%\includesvg[width =0.6\textwidth]{svg-inkscape/ribbon_concordance.svg}
	\includegraphics[page=2, width =0.6\textwidth]{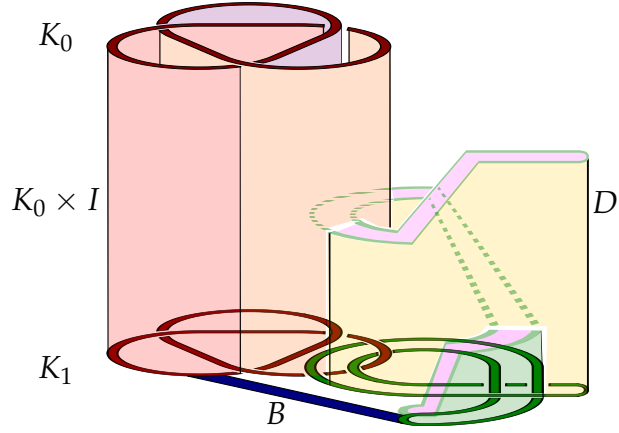}
	\caption{Example of a ribbon concordance with one critical point of index 0 and 1 each.}
	\label{fig:ribbon_conc}
\end{figure}

\begin{figure}[ht]
	\centering
	\hspace{1cm}
	%\includesvg[width =0.45\textwidth]{svg-inkscape/concordance_neighbourhood.svg}
	\includegraphics[page=2, width =0.45\textwidth]{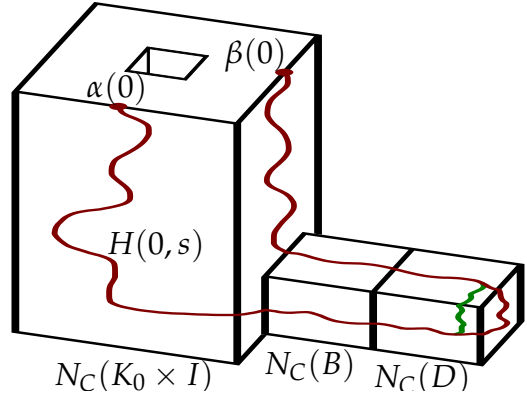}
	\caption{Fixing $H(0,s)$ so that it doesn't pass $N_C((0,1)_D \times \{1\}_D)$.}
	\label{fig:conc_nbh}
\end{figure}

We can now prove the theorem.
\begin{proof}[Proof of theorem~\ref{thm:ribbon_conc_quandle}]
	The projection of $C$ to $I$ is a Morse function. Since the Euler characteristic of $C$ is trivial ($C$ is an annulus), the number of
	0-handles equals the number of 1-handles.
	By Morse theory, we can assume that the $k$ critical points of index $0$ are mapped to values
	$$\frac{1}{2k +1}, \frac{3}{2k+1}, \dots, \frac{2i-1}{2k+1}, \dots, \frac{2k-1}{2k+1}\, ,$$
	while the $k$ critical points of index $1$ are mapped to values
	$$\frac{2}{2k+1}, \frac{4}{2k+1}, \dots, \frac{2i}{2k+1}, \dots, \frac{2k}{2k+1}\,.$$
	Identifying $\s^3 \backslash \{\ast\}$ with $\RR^3$, we can define the motion picture diagram
	$$\mathcal{M} = \{\mathcal{D}_0, \mathcal{D}_{1}, \dots, \mathcal{D}_{i}, \dots, \mathcal{D}_{2k}, \mathcal{D}_{2k+1}\}\, ,$$
	where the diagram $\mathcal{D}_{i}$ represents the link 
	$K_{i} := C \cap (\s^3 \times \{\tfrac{i + \epsilon}{2k+1}\})$ for $i \in \{1, \dots, 2k\}$,
	$0 < \epsilon <  1$, while the diagrams $\mathcal{D}_0$ and $\mathcal{D}_{2k+1}$ represent $K_0$ and $K_1$, respectively.
	
	\textbf{$\bullet$ monomorphism:}
	We can inductively construct a presentation for $Q(C)$ by the procedure, described on page~\pageref{procedure}.
	Define ribbon concordances $C_i := C \cap (\s^3 \times [0, \tfrac{i+\epsilon}{2k+1}])$
	and their fundamental quandles $Q_{i} := Q(C_i)$ for $i\in  \{0,\dots, 2k\}$. Notice that $Q_0 \cong Q(K_0)$ and $Q_{2k} \cong Q(C)$.
	To get a presentation of $Q_{2i}$ from $Q_{2i-2}$, we extend $C_{2i-2}$ to $C_{2i}$ by adding a 0-handle and then a 1-handle. By lemma~\ref{lemma:ribon_conc_quandle},
	{the quandle homomorphism $Q_{2i-2} \hookrightarrow Q_{2i}$ induced by inclusion is injective.}
	Combining {these}, we get
	$$Q(K_0) \cong Q_0 \hookrightarrow Q_{2} \hookrightarrow Q_{4} \hookrightarrow \dots \hookrightarrow Q_{2k-2} \hookrightarrow Q_{2k} \cong Q(C)$$
	and therefore $Q(K_0) \hookrightarrow Q(C)$.

	\textbf{$\bullet$ epimorphism:}
	We can also construct a presentation for $Q(C)$ by looking at the motion picture backwards, as a cobordism from $K_1$ to $K_0$ with critical points of index 1 and 2.
	Define the fundamental quandles $Q'_{i} := Q(C \cap (\s^3 \times [\tfrac{i - \epsilon}{2k+1},1]))$ for $i \in \{1, \dots, 2k+1\}$.
	Notice that $Q'_{2k+1} \cong Q(K_1)$ and $Q'_1 \cong Q(C)$.
	To get a presentation of $Q'_{2i}$ from $Q'_{2i+1}$, we add a relation (attach a 1-handle).
	To get a presentation of $Q'_{2i-1}$ from $Q'_{2i}$, we need do nothing (attach a 2-handle).
	As such, we have a surjective quotient homomorphism $Q'_{2i+1} \twoheadrightarrow Q'_{2i-1}$.
	Combining these, we get
	$$ Q(K_1) \cong Q'_{2k+1} \twoheadrightarrow Q'_{2k-1} \twoheadrightarrow \dots \twoheadrightarrow Q'_5 \twoheadrightarrow Q'_{3} \twoheadrightarrow Q'_{1} \cong Q(C)$$
	and therefore $Q(K_1) \twoheadrightarrow Q(C)$.
\end{proof}

Theorem~\ref{thm:ribbon_conc_quandle} provides, in theory, powerful obstructions to ribbon concordances between knots based on their fundamental quandles.
In practice, useful (minimal) presentations of fundamental quandles are often excruciatingly difficult to obtain and work with.
There are some families of knots where such presentations can be obtained quite easily: for example knots that can be represented
as closures of braids with two strands, which includes $T(p,2)$ torus knots, will have fundamental quandles of rank two (with the sole exception of the unknot).
More generally, any knot that can be represented as a closure of a braid with $q$ strands will have a fundamental quandle of rank $\leq q$.
This includes torus knots $T(p,q)$ for coprime integers $1 < q< p$, as they can be represented by diagrams shown on figure~\ref{fig:torus_knot}.
It is, however, difficult to show that quandle presentations obtained from such diagrams are minimal in terms of number of generators.

\begin{figure}[ht]
	\centering
	%\includesvg[width =\textwidth]{svg-inkscape/torus_knot.svg}
	\includegraphics[page=2, width = \textwidth]{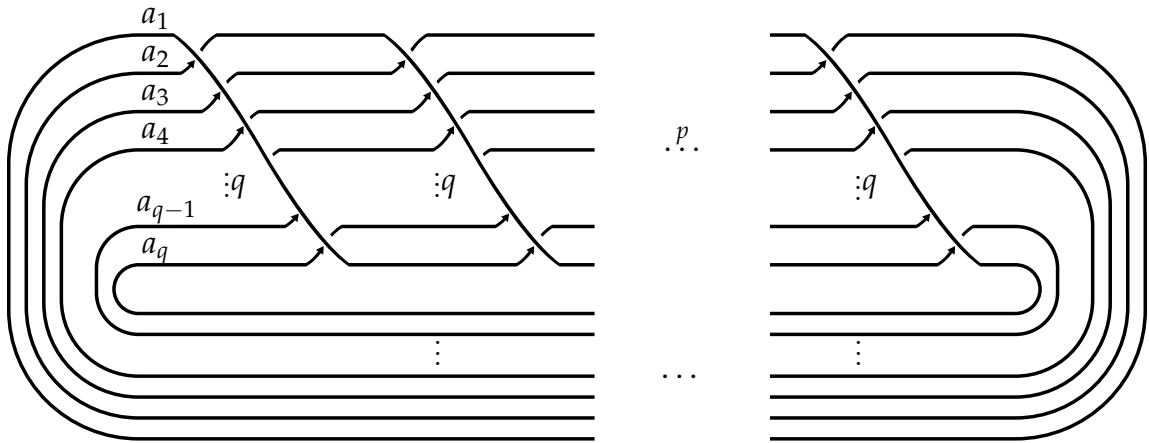}
	\caption{Diagram of $T(p,q)$ with marked quandle generators.}
	\label{fig:torus_knot}
\end{figure}

It would be interesting to see how invariants defined from the fundamental quandle of a knot, such as colouring invariants and
quandle cocycle invariants, interact with ribbon concordances. They could possibly provide more computable obstructions to ribbon
concordances between knots. Such is the case for surface knots, where one can directly relate the cocycle invariants of 
ribbon concordant surface-knots, as was shown in~\cite{CKS3}. The proof used there, however, does not
directly translate to the theory of ribbon concordances between classical knots; for a ribbon concordance from $K_1$ to $K_0$, a quandle colouring
of $K_1$ does not necessarily induce a quandle colouring of $K_0$. As such, a more subtle approach would be needed.
Another possible continuation of this study would be looking into how different quandle-like objects, such as racks, kei and biquandles, behave 
under ribbon concordances.

\section*{Acknowledgements}
Both authors are supported by the Slovenian Research Agency grant P1-0292.

\printbibliography

\end{document}